\definecolor{darkred2}{rgb}{0.7,0.1,0.1}
\definecolor{darkred}{rgb}{0.4,0.1,0.1}
\def\softness{0.4}
\definecolor{softred}{rgb}{1,\softness,\softness}
\definecolor{softgreen}{rgb}{\softness,1,\softness}
\definecolor{softblue}{rgb}{\softness,\softness,1}
\definecolor{softrg}{rgb}{1,1,\softness}
\definecolor{softrb}{rgb}{1,\softness,1}
\definecolor{softgb}{rgb}{\softness,1,1}
\numberwithin{figure}{section}
\numberwithin{equation}{section}
\theoremstyle{plain}
\newtheorem*{thm*}{Theorem}
\newtheorem{thm}{Theorem}[section]
\newtheorem{lem}[thm]{Lemma}
\theoremstyle{remark}
\theoremstyle{plain}
\newcommand{\rmd}{\mathrm{d}}
\newcommand{\rmi}{\mathrm{i}}
\newcommand{\supp}{\mathrm{supp}\,}
\newcommand{\drm}{{\mathrm d}}
\newcommand{\be}{\begin{equation}}
\newcommand{\ee}{\end{equation}}
\newcommand{\beu}{\begin{equation*}}
\newcommand{\eeu}{\end{equation*}}
\newcommand{\besu}{\begin{equation*}
\begin{aligned}}
\newcommand{\eesu}{\end{aligned}
\end{equation*}}
\newcommand{\bes}{\begin{equation}
\begin{aligned}}
\newcommand{\ees}{\end{aligned}
\end{equation}}
\newcommand\fra{\mathfrak a}
\newcommand\eps{\varepsilon}
\newcommand\ov{\overline}
\newcommand\sess{\sigma_{\rm ess}}
\newcommand\RE{\text{\rm Re}}
\newcommand\sign{{\rm sign\,}}
\newcommand\void[1]{}
\def\eps{\varepsilon}
\def\sess{\sigma_{\rm ess}}
      \def\dC{{\mathbb C}}
   \def\dN{{\mathbb N}}   
      \def\dR{{\mathbb R}}
      \def\cC{{\mathcal C}}
      \def\cO{{\mathcal O}}
\newcommand{\dom}{\mathrm{dom}\,}
\title[$\delta$-interactions on conical surfaces]{Schr\"odinger operators with $\delta$-interactions supported on conical surfaces}
\author{Jussi Behrndt \and Pavel Exner \and Vladimir Lotoreichik}
\begin{document}

\maketitle
\begin{abstract}
We investigate the spectral properties of self-adjoint Schr\"odinger operators 
with attractive $\delta$-interactions of constant strength 
$\alpha > 0$ supported on conical surfaces in $\dR^3$. It is shown that the essential spectrum is given by $[-\alpha^2/4,+\infty)$ and 
that the discrete spectrum is infinite and accumulates to 
$-\alpha^2/4$. 
Furthermore, an  asymptotic estimate of these eigenvalues is
obtained.
\end{abstract}

\section{Introduction}

The purpose of this paper is to analyse the spectrum of the three-dimensional Schr\"odinger operator $-\Delta_{\alpha,\cC_\theta}$ 
with an attractive $\delta$-interaction of constant strength $\alpha >0$ 
supported on the conical surface 
\[
\cC_\theta :=  
\Big\{(x,y,z) \in\dR^3\colon 
z := \cot(\theta)\sqrt{x^2+y^2}\Big\},\qquad  \theta\in(0,\pi/2).
\]
The Schr\"odinger operator $-\Delta_{\alpha,\cC_\theta}$  is defined via the first representation theorem 
\cite[Theorem VI.2.1]{Kato} as the unique self-adjoint operator in $L^2(\dR^3)$ which
is associated to the closed, densely defined, semibounded quadratic form
\begin{equation}
\label{fra}
\fra_{\alpha,\cC_\theta}[\psi] = 
\|\nabla \psi\|^2_{L^2(\dR^3;\dC^3)} - 
\alpha \int_{\cC_\theta}\,\vert\psi\vert^2 \,\drm\sigma   
\qquad \dom\fra_{\alpha,\cC_\theta}  = H^1(\dR^3);
\end{equation}
cf. \cite{BEL13,BEKS94}.
In a short form the main result of this note is the following theorem.

\begin{thm*}
For any $\theta\in (0,\pi/2)$ and $\alpha>0$ the essential spectrum of the operator $-\Delta_{\alpha,\cC_\theta}$ is
$[-\alpha^2/4, +\infty)$, the discrete spectrum is infinite and accumulates to $-\alpha^2/4$.
\end{thm*}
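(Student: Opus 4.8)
The plan is to treat the two assertions of the theorem separately, using the conical symmetry of $\cC_\theta$ to reduce everything, as far as possible, to a family of half-line Schr\"odinger operators indexed by an angular-momentum-type quantum number.

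\medskip
\noindent\textbf{Essential spectrum.}
For the identification $\sess(-\Delta_{\alpha,\cC_\theta}) = [-\alpha^2/4,+\infty)$ I would proceed by a standard two-sided argument.
First, for the inclusion $[-\alpha^2/4,+\infty)\subseteq \sess(-\Delta_{\alpha,\cC_\theta})$ I would construct a singular Weyl sequence. The heuristic is that, far from the apex, the conical surface looks locally like a plane, and the $\delta$-interaction of strength $\alpha$ on a plane in $\dR^3$ produces the transversal bound state $\erm^{-\alpha|t|/2}$ with energy $-\alpha^2/4$, combined with free motion in the two tangential directions contributing $[0,+\infty)$. Concretely, for each $\lambda \ge -\alpha^2/4$ I would take test functions of the form (transversal profile $\erm^{-\alpha|t|/2}$) $\times$ (a tangential plane-wave of wavenumber $\sqrt{\lambda+\alpha^2/4}$) $\times$ (a cutoff supported in a patch of $\cC_\theta$ that is pushed to infinity along a generator line), verify it lies in $H^1(\dR^3)$, that it approximately satisfies the eigenvalue equation in the form sense, and that the sequence is non-compact; this shows $\lambda\in\sess$. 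For the reverse inclusion $\sess(-\Delta_{\alpha,\cC_\theta})\subseteq [-\alpha^2/4,+\infty)$ I would use a Neumann bracketing / decomposition argument: cut $\dR^3$ into a ball $B_R$ around the apex and its complement, impose Neumann conditions on the sphere $\partial B_R$; the interior piece has compact resolvent and contributes nothing to the essential spectrum, while the exterior piece, on which $\cC_\theta\setminus B_R$ is a smooth (non-compact but apex-free) surface, can be shown to have spectrum bounded below by $-\alpha^2/4$ — e.g.\ by separating variables in spherical coordinates, writing the exterior operator as a direct sum over the Legendre-type angular modes of half-line operators whose $\delta$-interaction term, once the curvature is taken into account, is a bounded perturbation decaying like $1/r$ of the flat half-line operator with a point interaction, whose bottom is $-\alpha^2/4$. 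Then $\sess(-\Delta_{\alpha,\cC_\theta}) = \sess(\text{exterior part}) \subseteq [-\alpha^2/4,+\infty)$.

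\medskip
\noindent\textbf{Infiniteness of the discrete spectrum.}
Since $-\alpha^2/4 = \inf\sess$, it suffices to exhibit, for every $n\in\dN$, an $n$-dimensional subspace $\cL_n \subset H^1(\dR^3)$ on which $\fra_{\alpha,\cC_\theta}[\psi] < -\tfrac14\alpha^2\|\psi\|^2_{L^2}$; by the min-max principle this forces at least $n$ eigenvalues below $-\alpha^2/4$. The natural strategy is to separate the transversal direction from the cone explicitly. Introduce coordinates adapted to $\cC_\theta$: the signed distance $t$ to the surface and the intrinsic polar coordinates $(s,\omega)$ on the (flat, but with a conical singularity) surface, where $s$ is distance from the apex within $\cC_\theta$. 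For trial functions of the form $\psi(t,s,\omega) = \erm^{-\alpha|t|/2}\,\chi(t)\,f(s)$ with $\chi$ a fixed cutoff localizing $t$ near $0$ and $f$ depending only on $s$, one computes, using the coarea formula and $\int \alpha\erm^{-\alpha|t|}\,\drm t$-type identities, that
\begin{equation}
\label{reduction}
\fra_{\alpha,\cC_\theta}[\psi] + \tfrac14\alpha^2\|\psi\|^2_{L^2(\dR^3)}
\;\approx\; \sin\theta \int_0^\infty \Big( |f'(s)|^2 - \tfrac{c_\theta}{s^2}\,|f(s)|^2\Big)\, s\,\drm s
\;+\; (\text{lower order}),
\end{equation}
i.e.\ the leading effective one-dimensional operator on the half-line is (a multiple of) $-f'' - \tfrac1s f' - \tfrac{c_\theta}{s^2} f$, the radial part of a two-dimensional Schr\"odinger operator with an attractive inverse-square potential whose coupling $c_\theta>0$ is strong enough (in fact any positive coupling suffices once one works on the plane, because of the log-type behaviour at the origin, or more robustly on a conical surface with total angle $2\pi\sin\theta < 2\pi$ the inverse-square coupling is supercritical) that this operator has infinitely many negative eigenvalues. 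Picking $n$ linearly independent trial functions $f_1,\dots,f_n$ supported on disjoint dyadic annuli $\{2^k \le s \le 2^{k+1}\}$ on which the negative part of the above functional dominates, and using the exponential transversal profile to make the error terms (coming from $\chi'$, from the non-commuting of $t$ and the surface measure, and from the curvature $1/s$ of the generators) negligible for $s$ large, yields the required $n$-dimensional negative subspace. This gives infinitely many eigenvalues below $-\alpha^2/4$, and since they cannot go below $\inf\spec = \inf\fra_{\alpha,\cC_\theta}$ (finite, by semiboundedness of the form) they must accumulate, and by the essential-spectrum result the only possible accumulation point in $(-\infty,-\alpha^2/4]$ is $-\alpha^2/4$ itself.

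\medskip
\noindent\textbf{Main obstacle.}
The hard part is the rigorous reduction \eqref{reduction}, i.e.\ controlling the cross-terms that arise because the "transversal coordinate $\times$ surface coordinate" splitting is only asymptotically orthogonal: one must handle the Jacobian of the tubular-neighbourhood map (which involves the principal curvatures of $\cC_\theta$, blowing up like $1/s$ near the apex but decaying for large $s$), the commutator terms between $\partial_t$ and the cutoff $\chi$, and — most delicately — the behaviour near the conical apex, where the surface is not smooth and the inverse-square effective potential is genuinely singular. I would circumvent the apex entirely in the min-max construction by supporting all trial functions in the region $s\ge 1$, so that the relevant geometry is smooth and all curvature corrections are $O(1/s)$ and hence subordinate; the apex then only matters through the essential-spectrum lower bound, where the Neumann-bracketing ball $B_R$ isolates it harmlessly.
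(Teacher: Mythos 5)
Your overall two-sided strategy coincides with the paper's, and the singular-sequence half is essentially the paper's Step~1: the paper uses rotationally symmetric quasi-modes $\psi_{n,p}=\omega_{n,p}(r,z)/\sqrt{2\pi r}$ with $\omega_{n,p}$ a stretched cutoff times $e^{\rmi ps}e^{-\alpha|t|/2}$ pushed to infinity along a generator, i.e.\ your construction with the Jacobian normalisation made explicit. The genuine gap is in the inclusion $\sess(-\Delta_{\alpha,\cC_\theta})\subseteq[-\alpha^2/4,+\infty)$. Imposing a Neumann condition on $\partial B_R$ and discarding the interior does not reduce the difficulty: the essential spectrum is produced at infinity, where $\cC_\theta\setminus B_R$ still carries the interaction, so bounding the exterior piece below by $-\alpha^2/4$ is the original problem with a compact set removed. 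Moreover, the justification you offer --- that the exterior operator is a direct sum over Legendre-type angular modes of half-line operators with a $1/r$-decaying perturbation --- is not available: rotational symmetry only decomposes over azimuthal modes $e^{\rmi m\varphi}$, leaving for each $m$ a genuinely two-dimensional operator in $(R,\vartheta)$ with the $\delta$ supported on $\{\vartheta=\theta\}$, and no further exact separation occurs (the effective angular coupling grows like $\alpha R$, so a rigorous version needs an additional bracketing in $R$ and uniform control of the one-dimensional angular ground states). The paper instead brackets with a decomposition adapted to the cone: a widening tubular slab $\{s>n,\,|t|<\sqrt n\}$ around $\cC_\theta$ far from the apex, a bounded neighbourhood of the apex (compact resolvent), and the complement (non-negative operator); on the slab it drops the $s$- and $\varphi$-derivatives, controls the Jacobian $r(s,t)/r(s,0)=1+\cO(n^{-1/2})$ uniformly, and reduces to a Neumann interval with a centred $\delta$ of strength $\beta$, whose ground state is $-\beta^2/4+\cO(\beta^2e^{-\beta\sqrt n/2})$ by \cite{EY02}. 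You need an argument of this kind to close this half.

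For the discrete spectrum your route is viable and genuinely different. You generate the bound states from the extrinsic curvature: the tubular-neighbourhood reduction with profile $e^{-\alpha|t|/2}$ yields an effective potential $-c_\theta/s^2$ on the intrinsically flat surface with $c_\theta=\cot^2(\theta)/4>0$, and the radial two-dimensional reduction gives a half-line coupling $c_\theta+\tfrac14=1/(4\sin^2\theta)>\tfrac14$, supercritical exactly for $\theta<\pi/2$ --- consistent with the paper. The paper sidesteps your declared main obstacle (making the approximate reduction rigorous with errors $o(s^{-2})$ on dyadic annuli, which is delicate because the curvature corrections enter precisely at order $s^{-2}$) by an exact identity (Lemma~\ref{lem:sep}): for axisymmetric $\psi=\omega(r,z)/\sqrt{2\pi r}$ one has $\fra_{\alpha,\cC_\theta}[\psi]=\|\nabla\omega\|^2_{L^2(\dR^2_+)}-\int_{\dR^2_+}\tfrac{1}{4r^2}|\omega|^2-\alpha\|\omega|_{\Gamma_\theta}\|^2$ with no error term, after which the trial functions are built from a single Hardy-violating profile $\chi_1$ (Brezis--Marcus) on the stretching intervals $[n,n+n^2]$. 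If you pursue your version, the curvature computation and the error control are the substance of the proof and must be carried out; the paper's exact reduction is the cleaner path. Your closing remark on the accumulation at $-\alpha^2/4$ is correct.
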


In addition, we obtain an  asymptotic estimate of the eigenvalues of  $-\Delta_{\alpha,\cC_\theta}$ lying below $-\alpha^2/4$,
and the results also extend to local deformations of the conical surface $\cC_\theta$,
see Theorem~\ref{thm:main} and Theorem~\ref{locdeform} for details.
The proof of our main result is based on standard techniques in spectral theory of self-adjoint operators: we construct 
singular sequences and use Neumann bracketing in the spirit of \cite{EN03} to show the assertion on the essential spectrum;
for the infiniteness of the discrete spectrum we employ variational principles. 
The same approach was applied in \cite{S70} in the context
of Schr\"odinger operators with slowly decaying
negative regular potentials,
see also  \cite[\S XIII.3]{RS-IV}.
Similar arguments were also used in \cite{DEK01, ET10} for the closely related question of infiniteness
of the discrete spectrum for the Dirichlet Laplacian in a conical layer, 
see also  \cite{CEK04, J13, KV08, LL07, LR12} 
for further progress in this problem. We also point out  \cite{BEW09, DR13, EK02} for 
related spectral problems for Schr\"{o}dinger operators with $\delta$-potentials.

\section{Essential spectrum of $-\Delta_{\alpha,\cC_\theta}$}

In this section we show that the
essential spectrum of the operator
$-\Delta_{\alpha,\cC_\theta}$ is given by
$[-\alpha^2/4,+\infty)$. The proof of the inclusion
$\sess(-\Delta_{\alpha,\cC_\theta})\supseteq[-\alpha^2/4,+\infty)$  makes use of 
singular sequences and for the other inclusion a specially chosen Neumann bracketing
is used. A similar type of argument was also used in \cite{BEL13, EN03} for $\delta$ and $\delta'$-interactions
on broken lines in the two-dimensional setting. For completeness we mention that the theorem (and its proof) below is also valid for $\theta=\pi/2$, in which case
the conical surface is a half-plane, and the result is well-known.
\begin{thm}\label{mainthm1}
Let $-\Delta_{\alpha,\cC_\theta}$ be the self-adjoint operator in $L^2(\dR^3)$ associated to the form \eqref{fra}
and let  $\alpha>0$ and $\theta \in (0,\pi/2 )$. 
Then
\[
\sess(-\Delta_{\alpha,\cC_\theta}) = [-\alpha^2/4,+\infty).
\]
\end{thm}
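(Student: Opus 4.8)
The plan is to prove the two inclusions separately. For $[-\alpha^2/4,+\infty)\subseteq\sess(-\Delta_{\alpha,\cC_\theta})$ I would construct a singular (Weyl) sequence for every $\lambda\ge-\alpha^2/4$. Writing $\lambda=-\alpha^2/4+|\vec k|^2$ and introducing, in a neighbourhood of $\cC_\theta$, coordinates $(t,\sigma)$ with $t$ the signed normal distance to $\cC_\theta$ and $\sigma$ a point of the surface, the natural ansatz is $\psi_n(t,\sigma)=\mathrm e^{-\alpha|t|/2}\,\mathrm e^{\mathrm i\vec k\cdot\vec s(\sigma)}\,\chi_n(\sigma)$, where $\mathrm e^{-\alpha|t|/2}$ is the (normalised) ground state of the one–dimensional operator $-d^2/dt^2-\alpha\delta_0$, whose eigenvalue is $-\alpha^2/4$, $\vec s$ is a flattened tangential coordinate on $\cC_\theta$, and $\chi_n$ is a slowly varying cut–off supported on a patch of $\cC_\theta$ of diameter $\sim\sqrt n$ centred at a point at distance $n$ from the apex. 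After normalisation $\psi_n\rightharpoonup0$, and, working with the quadratic form \eqref{fra} (so as not to worry about the $\delta$–jump condition being only approximately satisfied), the remainder $\fra_{\alpha,\cC_\theta}[\psi_n,\phi]-\lambda\langle\psi_n,\phi\rangle$ is $o(1)\|\phi\|_{H^1}$: the terms containing $\nabla\chi_n$ are negligible because $\|\nabla\chi_n\|/\|\chi_n\|\to0$, and the curvature terms vanish because the principal curvatures of $\cC_\theta$ are $O(1/|x|)$ and the support of $\psi_n$ recedes to infinity. Hence $\lambda\in\sess(-\Delta_{\alpha,\cC_\theta})$.

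For the reverse inclusion $\sess(-\Delta_{\alpha,\cC_\theta})\subseteq[-\alpha^2/4,+\infty)$ I would use Neumann bracketing, in the spirit of \cite{EN03}. Fix a large radius $R$ and decompose $\dR^3=B_R\cup(\dR^3\setminus\ov{B_R})$ along the sphere of radius $R$ centred at the apex. The decoupled Neumann form is an extension of $\fra_{\alpha,\cC_\theta}$ to the larger domain $H^1(B_R)\oplus H^1(\dR^3\setminus\ov{B_R})$, so by the min–max principle the bottom of the essential spectrum of $-\Delta_{\alpha,\cC_\theta}$ is bounded below by that of the decoupled operator. On the bounded Lipschitz domain $B_R$ the Neumann operator with the (form–compact) $\delta$–term has compact resolvent, hence contributes no essential spectrum; it therefore remains to bound from below the bottom of the spectrum of the exterior operator $-\Delta^{\mathrm N}_{\dR^3\setminus\ov{B_R}}-\alpha\,\delta_{\cC_\theta}$.

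To estimate the exterior operator I would carry out a second Neumann bracketing, splitting $\dR^3\setminus\ov{B_R}$ into a tubular neighbourhood $T_R$ of $\cC_\theta\setminus\ov{B_R}$ of slowly growing radius $\rho=\rho(R)$ (for instance $\rho(R)=\sqrt R$, so that $\rho(R)\to\infty$ and $\rho(R)/R\to0$) and its complement. On the complement there is no $\delta$–interaction, so the Neumann Laplacian is non–negative. On $T_R$ one passes to tube coordinates $(t,\sigma)$, drops the non–negative tangential part of $\|\nabla\psi\|^2$, and compares the Jacobian $J(t,\sigma)$ with $J(0,\sigma)$; since the principal curvatures of $\cC_\theta$ on $\dR^3\setminus\ov{B_R}$ are $O(1/R)$, the two differ by a factor $1+O(\rho(R)/R)$. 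This reduces the form, fibrewise in $\sigma$, to the one–dimensional form $\int_{-\rho}^{\rho}|g'|^2\,dt-\alpha|g(0)|^2$ on $H^1(-\rho,\rho)$ with a slightly perturbed coupling constant, whose infimum is the explicitly computable ground–state energy $-\tfrac{\alpha^2}{4}-O(\mathrm e^{-c\rho})$ of $-d^2/dt^2-\alpha\delta_0$ on $(-\rho,\rho)$ with Neumann conditions at $\pm\rho$. Altogether $-\Delta^{\mathrm N}_{\dR^3\setminus\ov{B_R}}-\alpha\,\delta_{\cC_\theta}\ge-\tfrac{\alpha^2}{4}-o(1)$ as $R\to\infty$, so $\inf\sess(-\Delta_{\alpha,\cC_\theta})\ge-\alpha^2/4$, and together with the first part this proves the theorem.

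The hard part will be the lower bound for the exterior operator, where one must balance two competing effects: the curvature of $\cC_\theta$, which is harmless far from the apex but singular at it (forcing the removal of $B_R$ and the coupling $\rho(R)\sim\sqrt R$), and the spectral loss produced by the Neumann condition on the lateral boundary of the tube, which is only exponentially small once the tube radius $\rho(R)$ is sent to infinity. The singular–sequence part, by contrast, is essentially routine.
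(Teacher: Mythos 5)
Your proposal is correct and follows essentially the same route as the paper: a Weyl sequence built from the transverse profile $\erm^{-\alpha|t|/2}$ times a tangential plane wave receding from the apex for the inclusion $\supseteq$, and a two-scale Neumann bracketing (cut out a neighbourhood of the apex, take a tube of slowly growing radius around the remaining part of $\cC_\theta$, and reduce fibrewise to the one-dimensional $\delta$-interaction on an interval with Neumann ends, whose ground state energy is $-\alpha^2/4$ up to an exponentially small error) for the inclusion $\subseteq$. The only cosmetic differences are that the paper's singular sequence is rotationally symmetric and checked directly against the operator domain rather than the form, and its ``apex piece'' is the near-apex segment of the tube rather than a ball.
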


\begin{proof}
\emph{Step 1.} We verify the inclusion $\sess(-\Delta_{\alpha,\cC}) \supseteq [-\alpha^2/4,+\infty)$ 
by constructing singular sequences for the operator 
$-\Delta_{\alpha,\cC_\theta}$ 
for every point of the interval $[-\alpha^2/4,+\infty)$. 
Let us start by fixing a function 
$\chi_1\in C^\infty_0(1,2)$ 
such that 
\begin{equation}
\label{chi1}
\|\chi_1\|_{L^2(1,2)} = 1,
\end{equation}
and a function $\chi_2\in C^\infty_0(-\eps,\eps)$ 
with some fixed $\eps\in(0,\tan\theta)$, 
which satisfies 
\begin{equation}
\label{chi2}
0 \le \chi_2 \le 1\quad \text{and}\quad 
\chi_2(t) = 1~ \text{for} ~|t| < \eps/2.
\end{equation}
Define for all $p\in\dR$ and $n\in\dN$ the functions $\omega_{n,p}\colon\dR^2_+\rightarrow\dC$ as
\begin{equation*}
\omega_{n,p}(s,t) :=
\tfrac{1}{\sqrt{n}}
\Big(\chi_1(\tfrac{s}{n})\exp({\rm i} ps)\Big)
\Big(\chi_2(\tfrac{t}{n})\exp(-\tfrac{\alpha}{2}|t|)\Big) 
\in C(\dR^2_+)
\end{equation*}
in the coordinate system $(s,t)$ in Figure~\ref{fig}. Here $\dR^2_+$ denotes open right half-plane
$\{(r,z)\in\dR^2:r>0\}$.

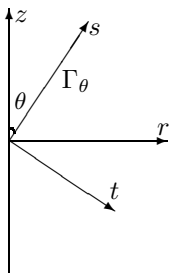
\begin{figure}[H]
\begin{center}
\begin{picture}(150,100)
\put(10,0){\vector(0,1){100}}
\put(10,50){\vector(1,0){60}}
\put(10,50){\vector(2,3){30}}
\put(10,50){\vector(3,-2){40}}
\put(30,70){$\Gamma_\theta$}
\put(40,90){$s$}
\put(12,95){$z$}
\put(66,52){$r$}
\put(48,28){$t$}
\put(12,62){{\small $\theta$}}
\qbezier(10,54)(11,56)(12,53)
\end{picture}
\end{center}
\caption{The right half-space $\dR^2_+$ with the coordinate system $(r,z)$.
The ray $\Gamma_\theta$ emerges from the origin 
and constitutes the angle $\theta\in(0,\pi/2)$ with the $z$-axis. 
The coordinate system $(s,t)$ is
associated with $\Gamma_\theta$.}
\label{fig}
\end{figure}

Note that because of the choice $\eps\in(0,\tan\theta)$ we have 
$\supp \omega_{n,p}\subset \dR^2_+$ for all $n\in\dN$ and, 
moreover, the distances between the $z$-axis and the supports of 
$\omega_{n,p}$ satisfy 
\begin{equation}
\label{rho}
\rho_n := 
\inf\{r\colon(r,z)\in\supp\omega_{n,p}\}\rightarrow +\infty,
\qquad n\rightarrow\infty.
\end{equation}
By dominated convergence,
using \eqref{chi1} and \eqref{chi2}, we get
\begin{equation}
\label{omeganplimit}
\begin{split}
\|\omega_{n,p}\|^2_{L^2(\dR^2_+)} & = 
\Bigg(\frac{1}{n}
\int_{n}^{2n}\big|\chi_1(\tfrac{s}{n})e^{{\rm i} ps}\big|^2 
{\rm d}s\Bigg)
\Bigg(
\int_{-\varepsilon n}^{\varepsilon n}
\big|\chi_2(\tfrac{t}{n})\big|^2e^{-\alpha|t|} 
{\rm d}t\Bigg)\\[0.2ex]
& = \int_{ -\eps n}^{ \eps n}
\big|\chi_2(\tfrac{t}{n})\big|^2e^{-\alpha|t|}{\rm d}t
\rightarrow \int_{-\infty}^\infty e^{-\alpha|t|}{\rm d}t = 
\frac{2}{\alpha},\quad n\rightarrow\infty.
\end{split}
\end{equation}
We denote by $\omega_{n,p,\pm}$ 
the restrictions of 
$\omega_{n,p}$ onto the open subsets 
\[
S_+ = \{(r,z)\in\dR^2_+\colon z > r\cot\theta\}
\quad\text{and}\quad
S_- = \{(r,z)\in\dR^2_+\colon z < r\cot\theta\}
\] 
of $\dR^2_+$.
The partial derivatives of $\omega_{n,p,\pm}$ 
with respect to $s$ and $t$ are given by
\begin{equation*}
\begin{split}
\partial_s\omega_{n,p,\pm} &
= 
\tfrac{1}{\sqrt{n}}\Big(\tfrac{1}{n}\chi_1'(\tfrac{s}{n})e^{{\rm i}ps}
+{\rm i}p\chi_1(\tfrac{s}{n})e^{{\rm i}ps}\Big)
\Big(\chi_2(\tfrac{t}{n})e^{\pm\frac{\alpha}{2}t}\Big),\\
\partial_t\omega_{n,p,\pm} &= \tfrac{1}{\sqrt{n}}
\Big(\chi_1(\tfrac{s}{n})e^{{\rm i}ps}\Big)
\Big(\tfrac{1}{n}\chi_2'(\tfrac{t}{n})e^{\pm\frac{\alpha}{2}t} \pm
\tfrac{\alpha}{2}\chi_2(\tfrac{t}{n})e^{\pm\frac{\alpha}{2}t}\Big).
\end{split}
\end{equation*}
Similarly as in \eqref{omeganplimit}, 
using dominated convergence, we get
\begin{equation}
\label{nablaomeganplimit}
\|\nabla\omega_{n,p}\|_{L^2(\dR^2_+;\dC^2)}^2 = 
\int_{\dR^2_+}
\big(|\partial_s \omega_{n,p}|^2+|\partial_t \omega_{n,p}|^2)\rmd s\rmd t
\rightarrow 
\Big(p^2 +\frac{\alpha^2}{4}\Big)\frac{2}{\alpha},
\quad n\rightarrow\infty.
\end{equation}
Let us define the sequence of functions 
$\psi_{n,p}\colon\dR^3\rightarrow\dC$ 
as
\begin{equation}
\label{psinp}
\psi_{n,p}(r,\varphi,z) :=
\frac{\omega_{n,p}(r,z)}{\sqrt{2\pi r}},
\qquad n\in\dN,
\end{equation}
where the functions $\omega_{n,p}\colon\dR^2_+\rightarrow\dC$ 
are interpreted as rotationally invariant functions on $\dR^3$ 
in the cylindrical coordinate system $(r,\varphi,z)$. 
The hypersurface $\cC_\theta$ separates the Euclidean space
$\dR^3$ into two unbounded Lipschitz domains 
$\Omega_+$ and $\Omega_-$, 
where
\[
\begin{split}
\Omega_+ & =  
\big\{ (x,y,z)\in\dR^3\colon z > \cot(\theta)\sqrt{x^2 + y^2}\big\},\\
\Omega_- & =  
\big\{ (x,y,z)\in\dR^3\colon z < \cot(\theta)\sqrt{x^2 + y^2}\big\}.
\end{split}
\]
We use the notation $\psi_{n,p,\pm} := \psi_{n,p}|_{\Omega_\pm}$. 
Then $\psi_{n,p,\pm}\in C^\infty(\Omega_\pm)$ and
from \eqref{omeganplimit} we obtain
\begin{equation}
\label{psinplimit}
\|\psi_{n,p}\|^2_{L^2(\dR^3)} = 
\|\omega_{n,p}\|^2_{L^2(\dR^2_+)}\rightarrow \frac{2}{\alpha},
\qquad n\rightarrow\infty.
\end{equation}
We claim that $\psi_{n,p}\in\dom(-\Delta_{\alpha,\cC_\theta})$. For this it remains to check that the boundary conditions
\begin{equation}\label{jussibcs}
 \psi_{n,p,+}|_{\Sigma} = \psi_{n,p,-}|_{\Sigma}\quad\text{and}\quad 
 \partial_{\nu_+}\psi_{n,p,+}|_{\Sigma} +
\partial_{\nu_-} \psi_{n,p,-}|_{\Sigma}=\alpha\psi_{n,p}|_{\Sigma}
\end{equation}
are satisfied; cf. \cite[Theorem 3.3\,(i)]{BEL13}. In fact,
by the definition of $\omega_{n,p}$ we have
$\omega_{n,p,+}|_{\Sigma} = \omega_{n,p,-}|_{\Sigma}$,
which implies that the first condition in \eqref{jussibcs} holds. 
 Furthermore, one computes
\begin{equation}\label{okusethis}
\partial_{\nu_+}\omega_{n,p,+}|_{\Sigma} +
\partial_{\nu_-} \omega_{n,p,-}|_{\Sigma} = \alpha \tfrac{1}{\sqrt{n}}
\Big(\chi_1(\tfrac{s}{n})\exp({\rm i} ps)\Big)=
\alpha\omega_{n,p}|_{\Sigma}.
\end{equation}
The gradient of $\psi_{n,p,\pm}$
can be expressed as
\begin{equation*}
\nabla \psi_{n,p,\pm} = \tfrac{1}{\sqrt{2\pi r}}\nabla\omega_{n,p,\pm}
+ \omega_{n,p,\pm}\nabla\big(\tfrac{1}{\sqrt{2\pi r}}\big),
\end{equation*}
where $\nabla$ acts on the functions $(r,\varphi,z)\mapsto \omega_{n,p,\pm}(r,z)$ and $(r,\varphi,z)\mapsto\tfrac{1}{\sqrt{2\pi r}}$.
Hence, we obtain
\[
\begin{split}
\partial_{\nu_+}\psi_{n,p,+}|_{\Sigma} +
\partial_{\nu_-} \psi_{n,p,-}|_{\Sigma} &= \big(\tfrac{1}{\sqrt{2\pi r}}\big|_{\Sigma}\big)\big(\partial_{\nu_+}\omega_{n,p,+}|_{\Sigma} +
\partial_{\nu_-} \omega_{n,p,-}|_{\Sigma}\big)\\
&\qquad\qquad+
\big(\omega_{n,p}|_\Sigma\big)
\big(\partial_{\nu_+}
\big(\tfrac{1}{\sqrt{2\pi r}}\big)\big|_{\Sigma}
+\partial_{\nu_-}
\big(\tfrac{1}{\sqrt{2\pi r}}\big)\big|_{\Sigma}\big)\\
&=\big(\tfrac{1}{\sqrt{2\pi r}}\big|_{\Sigma}\big)
\alpha\big(\omega_{n,p}|_{\Sigma}\big) =
\alpha\psi_{n,p}|_{\Sigma},
\end{split}
\]
where \eqref{okusethis} was used in the second equality.
Thus we have verified \eqref{jussibcs} and therefore
$\psi_{n,p}\in\dom(-\Delta_{\alpha,\cC_\theta})$.
Moreover, according to  \cite[Theorem 3.3\,(i)]{BEL13} we also have
\begin{equation}
\label{Lap}
-\Delta_{\alpha,\cC_\theta}\psi_{n,p} = (-\Delta\psi_{n,p,+})\oplus(-\Delta\psi_{n,p,-}).
\end{equation}
Using the expression for the
three-dimensional Laplacian in cylindrical coordinates
we find
\[
-\Delta\psi_{n,p,\pm} =
-\tfrac{1}{r}\partial_r(r\partial_r\psi_{n,p,\pm}) 
-\partial_z^2\psi_{n,p,\pm},
\]
where the angular term is absent since the functions $\psi_{n,p,\pm}$ 
do not depend on $\varphi$.
The above expression can be rewritten as
\begin{equation}
\label{Lap1}
-\Delta\psi_{n,p,\pm}  = 
-\partial_r^2\psi_{n,p,\pm} -\partial_z^2\psi_{n,p,\pm} 
- \tfrac{1}{r}(\partial_r\psi_{n,p,\pm}).
\end{equation}

Next we compute the first and second order
partial derivatives of $\psi_{n,p,\pm}$ 
with respect to $r$:
\begin{equation}
\label{r}
\begin{split}
\partial_r\psi_{n,p,\pm} &= 
\frac{\partial_r\omega_{n,p,\pm}}{\sqrt{2\pi r}} -
\frac{\omega_{n,p,\pm}}{2\sqrt{2\pi} r^{3/2}},\\
\partial_r^2\psi_{n,p,\pm} &= 
\frac{\partial_r^2\omega_{n,p,\pm}}{\sqrt{2\pi r}} -
\frac{\partial_r\omega_{n,p,\pm}}{\sqrt{2\pi} r^{3/2}} + 
\frac{3}{4}\frac{\omega_{n,p,\pm}}{\sqrt{2\pi}r^{5/2}}.
\end{split}
\end{equation}
The last two summands in the expression for
$\partial_r^2\psi_{n,p,\pm}$ can be estimated 
in $L^2$-norm as
\begin{equation}
\label{estimates}
\begin{split}
\Bigg\|
\frac{\partial_r\omega_{n,p,\pm}}{\sqrt{2\pi} r^{3/2}}
\Bigg\|^2_{L^2(\dR^3)}
&\le\frac{1}{\rho_n^2}\|\nabla\omega_{n,p}\|^2_{L^2(\dR^2_+;\dC^2)} \rightarrow 0,
\qquad n\rightarrow \infty,
\\[0.2ex]
\frac{9}{16}\Bigg\|\frac{\omega_{n,p,\pm}}
{\sqrt{2\pi}r^{5/2}}\Bigg\|_{L^2(\dR^3)}^2
& \le \frac{9}{16\rho_n^4}
\|\omega_{n,p}\|^2_{L^2(\dR^2_+)} \rightarrow 0,
\qquad n\rightarrow \infty,
\end{split}
\end{equation}
where we have used 
\eqref{rho}, \eqref{omeganplimit} and \eqref{nablaomeganplimit}.
The second order partial derivatives of $\psi_{n,p,\pm}$ with
respect to $z$ are 
\begin{equation}
\label{z}
\partial_z^2\psi_{n,p,\pm} = 
\frac{\partial_z^2\omega_{n,p,\pm}}{\sqrt{2\pi r}}.
\end{equation}
Using \eqref{r}, \eqref{estimates}, \eqref{z} 
and the invariance of the Laplacian
under rotation of the coordinate system we obtain that
\begin{equation}
\label{Lap2}
-\partial_r^2\psi_{n,p,\pm} - \partial_z^2\psi_{n,p,\pm} 
=
-\frac{1}{\sqrt{2\pi r}}
\big(\partial_s^2\omega_{n,p,\pm} +\partial_t^2\omega_{n,p,\pm}\big)
 + o(1), \quad n\rightarrow \infty;
\end{equation}
here and in the following 
we understand $o(1)$ in the strong sense with
respect to the corresponding $L^2$-norm.
With the help of \eqref{r} the norm of the last summand on the right hand side in \eqref{Lap1} 
can be estimated as
\begin{equation*}
\Bigg\|\frac{\partial_r\psi_{n,p,\pm}}{r}\Bigg\|_{L^2(\dR^3)}^2 
\le\Bigg\|\frac{\partial_r\omega_{n,p,\pm}}{\sqrt{2\pi}r^{3/2}}\Bigg\|_{L^2(\dR^3)}^2+
\Bigg\|\frac{\omega_{n,p,\pm}}{2\sqrt{2\pi}r^{5/2}}\Bigg\|_{L^2(\dR^3)}^2, 
\end{equation*}
and from \eqref{estimates} we conclude
\[
\Bigg\|\frac{\partial_r\psi_{n,p,\pm}}{r}\Bigg\|_{L^2(\dR^3)}^2 = o(1), \qquad n\rightarrow \infty.
\]

From \eqref{Lap1},
the latter result and \eqref{Lap2} we obtain
\begin{equation}
\label{Lap3}
-\Delta\psi_{n,p,\pm} = 
-\frac{1}{\sqrt{2\pi r}}
\big(\partial_s^2\omega_{n,p,\pm} +\partial_t^2\omega_{n,p,\pm}\big)
 + o(1),
\quad n\rightarrow \infty.
\end{equation}
Again using dominated convergence we compute
\begin{equation}
\label{s}
\begin{split}
\partial_s^2\omega_{n,p,\pm}  
& \!=\! \tfrac{1}{\sqrt{n}}
\Big(\chi_2(\tfrac{t}{n})e^{\pm\frac{\alpha}{2}t}\Big)\!
\Big(\tfrac{1}{n^2}\chi_1^{\prime\prime}(\tfrac{s}{n})e^{\rmi ps} 
+ \tfrac{2\rmi p}{n}\chi_1'(\tfrac{s}{n})e^{\rmi p s}  
- p^2\chi_1(\tfrac{s}{n})e^{\rmi  ps}\Big)\\
&= -p^2\omega_{n,p,\pm} + o(1),\qquad n\rightarrow \infty,
\end{split}
\end{equation}
and
\begin{equation}
\label{t}
\begin{split}
\partial_t^2\omega_{n,p,\pm} & 
\!=\! \tfrac{1}{\sqrt{n}}\Big(\chi_1(\tfrac{s}{n})e^{{\rm i}ps}\Big)\!
\Big(\tfrac{1}{n^2}\chi_2''(\tfrac{t}{n})e^{\pm\frac{\alpha}{2}t}
\!\pm\!
\tfrac{\alpha}{n}\chi_2'(\tfrac{t}{n})e^{\pm\frac{\alpha}{2}t}
\!+\!
\tfrac{\alpha^2}{4}\chi_2(\tfrac{t}{n})e^{\pm\frac{\alpha}{2}t}\Big)
\\
&= \tfrac{\alpha^2}{4}\omega_{n,p,\pm} + o(1),\qquad n\rightarrow \infty.
\end{split}
\end{equation}
Finally, employing \eqref{Lap}, \eqref{Lap3}, 
the definition of $\psi_{n,p}$ in \eqref{psinp} 
and \eqref{s}, \eqref{t} we arrive at 
\begin{equation}
\label{Lap4}
-\Delta_{\alpha,\cC_\theta}\psi_{n,p} = 
\Big(-\frac{\alpha^2}{4} +p^2\Big)\psi_{n,p}+o(1),
\qquad n\rightarrow \infty.
\end{equation}

Since the supports of $\psi_{ 2^k,p}$ and $\psi_{2^{k'},p}$, $k\not=k^\prime$, are disjoint the sequence $\{\psi_{2^k,p}\}_k$  converges weakly to zero.
Moreover, by  \eqref{psinplimit} we have $\liminf\|\psi_{ 2^k,p}\|_{L^2(\dR^3)} > 0$ 
and hence \eqref{Lap4} implies that $\{\psi_{2^k,p}\}_{k}$
is a singular sequence for the operator $-\Delta_{\alpha,\cC_\theta}$ corresponding to the point $-\alpha^2/4 +p^2$.
Therefore, $-\alpha^2/4+p^2\in\sess(-\Delta_{\alpha,\cC_\theta})$ for all $p\in\dR$ (see, e.g. \cite[Theorem 9.1.2]{BS87} or \cite[Proposition 8.11]{S})
and it follows that $[-\alpha^2/4,+\infty)\subseteq\sess(-\Delta_{\alpha,\cC_\theta})$.

\vspace{0.8ex}

\noindent\emph{Step 2.}
In this step we show the inclusion      
$\sess(-\Delta_{\alpha,\cC_\theta})\subseteq [-\alpha^2/4,+\infty)$
using form decomposition methods. 
For sufficiently large $n\in\dN$ we define three subsets of 
the closed half-plane
$\ov{\dR^2_+} := \{(r,z)\in\dR^2\colon r\ge 0,\,z\in\dR\}$
\begin{equation*}
\begin{split}
 \pi_{n}^1 &:= 
\{(r(s,t),z(s,t))\in\ov{\dR^2_+}\colon s > n, |t| < \sqrt{n}\}
\subset\ov{\dR^2_+},\\
\pi_{n}^2 & := 
\big\{(r(s,t),z(s,t))\in\ov{\dR^2_+}\colon s < n, |t| < \sqrt{n}\big\}
\subset\ov{\dR^2_+},\\
 \pi_n^3 & := 
\big\{(r(s,t),z(s,t))\in\ov{\dR^2_+}\colon |t| > \sqrt{n}\big\}
\subset\ov{\dR^2_+},
\end{split}
\end{equation*}
as shown in Figure~\ref{fig2}.

\begin{figure}[H]
\begin{center}
\begin{picture}(150,140)
\put(10,0){\vector(0,1){140}}
\put(10,70){\vector(1,0){70}}
\multiput(10,70)(12,18){4}{\line(2,3){10}}
\put(60,143){\vector(2,3){7}}
\multiput(10,70)(15,-10){4}{\line(3,-2){11}}
\put(70,30){\vector(3,-2){10}}
\put(10,90){\line(2,3){40}}
\put(10,50){\line(2,3){60}}
\put(20,105){\line(3,-2){18}}
\put(47,121){$\Gamma_\theta$}
\put(65,145){$s$}
\put(12,135){$z$}
\put(74,62){$r$}
\put(71,18){$t$}
\put(30,110){$\pi_n^1$}
\put(30,47){$\pi_n^3$}
\put(20,127){$\pi_n^3$}
\put(19,84){ $\pi_n^2$}
\put(11,77){{\small $\theta$}}
\qbezier(10,74)(11,76)(12,73)
\end{picture}
\end{center}
\caption{The subsets $ \pi_{n}^1$, $ \pi_{n}^2$ and 
$\pi_{n}^3$ of the closed half-plane $\ov{\dR^2_+}$.}
\label{fig2}
\end{figure}
\noindent The ray $\Gamma_\theta$, which emerges from the origin
and constitutes the angle $\theta$ with $z$-axis, 
is decomposed into 
\[
\begin{split}
\Gamma^1_{\theta,n} &:= \{((r(s,t),z(s,t))\in\Gamma_\theta\colon s > n\},\\
\Gamma^2_{\theta,n} &:= \{((r(s,t),z(s,t))\in\Gamma_\theta\colon s < n\}.
\end{split}
\]
The splitting $\{ \pi_n^k\}_{k=1}^3$ of $\ov{\dR^2_+}$
induces the splitting of $\dR^3$ into three domains
\[
\Omega_n^k := \big\{(r,\varphi,z)\colon (r,z)\in\pi_n^k,~ 
\varphi\in [0,2\pi)\big\}\subset\dR^3,\qquad k=1,2,3,
\]
and the splitting of the conical surface $\cC_\theta$
into two parts
\[
\begin{split}
\cC^1_{\theta,n} &:= 
\{(r,\varphi,z)\colon 
(r,z)\in\Gamma_{\theta,n}^1,~\varphi\in[0,2\pi)\}\subset\cC_\theta,\\
\cC^2_{\theta,n} &:= 
\{(r,\varphi,z)\colon 
(r,z)\in\Gamma_{\theta,n}^2,~\varphi\in[0,2\pi)\}\subset\cC_\theta.
\end{split}
\]
We agree to denote the restriction of $ \psi\in L^2(\dR^3)$ onto $\Omega_n^k$
with $k=1,2,3$ by $\psi_k$. 

Consider the quadratic form
\[
\begin{split}
\fra_{\alpha,\cC_\theta,n}[ \psi] &
:= \sum_{k=1}^3\|\nabla  \psi_k\|^2_{L^2(\Omega_n^k;\dC^3)}
- \alpha\| \psi_1|_{\cC^1_{\theta,n}}\|^2_{L^2(\cC^1_{\theta,n})} 
- \alpha\| \psi_2|_{\cC^2_{\theta,n}}\|^2_{L^2(\cC^2_{\theta,n})},\\
\dom\fra_{\alpha,\cC,n} & = \bigoplus_{k=1}^3 H^1(\Omega^k_n).
\end{split}
\]
As in the proof of 
\cite[Proposition 3.1]{BEL13} one verifies that the form $\fra_{\alpha,\cC_\theta,n}$
is closed, densely defined, symmetric and semibounded from below.
Hence $\fra_{\alpha,\cC_\theta,n}$ induces
a self-adjoint operator $-\Delta_{\alpha,\cC_\theta,n}$ in $L^2(\dR^3)$ via the first representation theorem \cite[Theorem VI.2.1]{Kato}.
The operator $-\Delta_{\alpha,\cC_\theta,n}$ can be decomposed
into an orthogonal sum $\oplus_{k=1}^3 H_{n,k}$ of self-adjoint operators $H_{n,k}$ in $L^2(\Omega_n^k)$ with
respect to the orthogonal decomposition 
$L^2(\dR^3) = \oplus_{k=1}^3 L^2(\Omega_n^k)$, where $H_{n,1}$ and $H_{n,2}$ correspond to the quadratic forms
\begin{equation*}
 \begin{split}
  \fra_{n,1}[ \psi_1] &= \|\nabla  \psi_1\|^2_{L^2(\Omega_n^1;\dC^3)} -
  \alpha
  \| \psi_1|_{\cC^1_{\theta,n}}\|^2_{L^2(\cC^1_{\theta,n})},\qquad 
  \dom\fra_{n,1} = H^1(\Omega_n^1),\\
   \fra_{n,2}[ \psi_2] &= 
   \|\nabla  \psi_2\|^2_{L^2(\Omega_n^2;\dC^3)} -
   \alpha\| \psi_2|_{\cC^2_{\theta,n}}\|^2_{L^2(\cC^2_{\theta,n})},\qquad \dom\fra_{n,2} = H^1(\Omega_n^2),
 \end{split}
\end{equation*}
respectively, 
and $H_{n,3}$ corresponds to the quadratic form 
$\fra_{n,3}[ \psi_3]=
\|\nabla  \psi_3\|^2_{L^2(\Omega_n^{3};\dC^3)}$, $\dom\fra_{n,3} = H^1(\Omega_n^3)$.

Let us first estimate the spectrum of $H_{n,1}$. For this note that $C^\infty(\Omega_{n}^1)\cap H^1(\Omega_n^1)$ is a core of $\fra_{n,1}$ and thus it suffices 
to use functions from this set in the estimates below (see, e.g. \cite[Theorem 4.5.3]{D95}).
For any $ \psi_1\in  C^\infty(\Omega_{n}^1)\cap H^1(\Omega_n^1)$ normalized as
$\| \psi_1\|_{L^2(\Omega_{n}^1)}=1$ we obtain 
\begin{equation*}
 \begin{split}
 \fra_{n,1}[ \psi_1]  \ge 
\int_0^{2\pi}\Bigg(\int_n^{+\infty}\int_{-\sqrt{n}}^{\sqrt{n}} &\,
r(s,t)|\partial_t \psi_1(s,t,\varphi)|^2
\rmd t \rmd s \\ & \qquad - \alpha \int_n^{+\infty} 
r(s,0)| \psi_1(s,0,\varphi)|^2 
\rmd s\Bigg) \rmd \varphi, 
 \end{split}
\end{equation*}
where we have used the form of the gradient in cylindrical coordinates 
and the invariance of the gradient with respect to rotations of the coordinate system, and the nonnegative terms corresponding to the partial derivatives of $\psi_1$
with respect to $\varphi$ and $s$ where estimated from below by zero.  
Note that for simple geometric reasons we have $r(s,t) \ge r(s,-\sqrt{n})$ for all $(s,t)\in \pi_n^1$. Using this observation we get
\begin{equation}\label{fran1}
\begin{split}
\fra_{n,1}[ \psi_1] \ge
\int_0^{2\pi}\int_n^{+\infty}
r(s,-\sqrt{n})\Bigg(\int_{-\sqrt{n}}^{\sqrt{n}}&
|\partial_t \psi_1(s,t,\varphi)|^2 \rmd t \\
&\qquad -\tfrac{\alpha r(s,0)}{r(s,-\sqrt{n})}| \psi_1(s,0,\varphi)|^2 \Bigg)
\rmd s \rmd \varphi.
\end{split}
\end{equation}
Consider the closed, densely defined, symmetric and semibounded form
\begin{equation*}
 \mathfrak b[h]=\int_{-\sqrt{n}}^{\sqrt{n}} \vert h^\prime(t)\vert^2\rmd t - \beta \vert h(0)\vert ^2,\qquad \dom\mathfrak b=H^1((-\sqrt{n},\sqrt{n})),
\end{equation*}
and denote by $ \mu(\beta, 2\sqrt{n}) < 0$
the lower bound of the spectrum
of the associated 1-D Schr\"odinger operator on the interval $(-\sqrt{n},\sqrt{n})$
with Neumann boundary conditions at the endpoints and attractive
$\delta$-interaction of strength $\beta > 0$ located at $0$. Then 
\begin{equation*}
 \mathfrak b[h]\geq  \mu(\beta, 2\sqrt{n}) \int_{-\sqrt{n}}^{\sqrt{n}} \vert h (t)\vert^2\rmd t 
\end{equation*}
holds for all $h\in H^1((-\sqrt{n},\sqrt{n}))$ and hence \eqref{fran1} can be further estimated as 
\begin{equation}
\label{fran2}
\fra_{n,1}[ \psi_1]\ge 
\int_0^{2\pi}\int_n^{+\infty}
 \mu\Big(\tfrac{\alpha r(s,0)}{r(s,-\sqrt{n})},2\sqrt{n}\Big)
\int_{-\sqrt{n}}^{\sqrt{n}}r(s,-\sqrt{n})| \psi_1(s,t,\varphi)|^2
\rmd t \rmd s \rmd \varphi.
\end{equation}
By the definition of $ \pi_n^1$ one has 
\begin{equation}
\label{frac}
r(s,-\sqrt{n}) = r(s,t)\Big(1 + \cO(\tfrac{1}{\sqrt{n}})\Big),
\qquad n\rightarrow\infty,
\end{equation}
for $(s,t)\in  \pi_n^1$,
where the remainder is uniform in $s$.
Hence, we obtain from \eqref{fran2} and \eqref{frac}
\begin{equation}
\label{fran3}
\fra_{n,1}[ \psi_1] \ge
 \mu\Big(\alpha\big(1+\cO\big(\tfrac{1}{\sqrt{n}}\big)\big),2\sqrt{n}\Big)
\Big(1 + \cO(\tfrac{1}{\sqrt{n}})\Big),
\qquad n\rightarrow\infty,
\end{equation}
where we used that
\[
\int_0^{2\pi}\int_n^{+\infty}\int_{-\sqrt{n}}^{\sqrt{n}}
r(s,t)| \psi_1(s,t,\varphi)|^2\rmd t\rmd s \rmd \varphi 
= \| \psi\\
_1\|^2_{L^2(\Omega_n^1)} = 1.
\]
According to \cite[Proposition 2.5]{EY02} the following estimate 
\[
 \mu(\beta, 2\sqrt{n}) \ge -\tfrac{\beta^2}{4} - C\beta^2\exp(-\tfrac12\beta \sqrt{n})
\]
holds with some constant $C > 0$ and $n$ sufficiently large.
Hence,
\[
 \mu\Big(\alpha\big(1+O\big(\tfrac{1}{\sqrt{n}}\big)\big),2\sqrt{n}\Big) 
\ge -\tfrac{\alpha^2}{4} + \cO(\tfrac{1}{\sqrt{n}}),
\qquad n\rightarrow\infty.
\]
Plugging the above estimate into \eqref{fran3} we arrive at
\[
\fra_{n,1}[ \psi_1] \ge 
-\tfrac{\alpha^2}{4} + \cO(\tfrac{1}{\sqrt{n}}),
\qquad n\rightarrow\infty.
\]
Hence, for any $\eps > 0$ there exists a sufficiently large $n$
for which 
\begin{equation}
\label{spec}
\inf\sigma(H_{n,1}) \ge -\tfrac{\alpha^2}{4} -\eps.
\end{equation}

As $H^1(\Omega_n^2)$ is compactly embedded into $L^2(\Omega_n^2)$
the essential spectrum of $H_{n,2}$ is empty. 
The operator $H_{n,3}$ is non-negative and hence $\sigma(H_{n,3})\subseteq [0,+\infty)$. 
Due to the orthogonal decomposition 
$-\Delta_{\alpha,\cC_\theta,n}=\oplus_{k=1}^3 H_{n,k}$
the property \eqref{spec} implies  that for any $\eps > 0$
there exists a sufficiently large $n$ for which
\begin{equation}
\label{estn}
\inf\sess(-\Delta_{\alpha,\cC_\theta,n}) \ge-\tfrac{\alpha^2}{4} -\eps.
\end{equation}

Finally, we apply a Neumann bracketing argument.
Notice that the ordering
$\fra_{\alpha,\cC_\theta,n}\leq\fra_{\alpha,\cC_\theta}$
holds in the sense of quadratic forms; cf. \cite[\S VI.5]{Kato}. 
Hence by 
\cite[Theorem 10.2.4]{BS87} 
\begin{equation}
\label{ordering}
\inf\sess(-\Delta_{\alpha,\cC_{\theta},n})
\le
\inf\sess(-\Delta_{\alpha,\cC_\theta}).
\end{equation}
In view of \eqref{ordering} the  estimate \eqref{estn}
implies that for any $\eps > 0$
\[
\inf\sess(-\Delta_{\alpha,\cC_\theta}) \ge -\tfrac{\alpha^2}{4}-\eps
\]
and thus passing to the limit $\eps \rightarrow 0+$ we arrive at
\[
\inf\sess(-\Delta_{\alpha,\cC_\theta}) \ge -\tfrac{\alpha^2}{4},
\]
which shows the inclusion 
$\sess(-\Delta_{\alpha,\cC_\theta})\subseteq [-\alpha^2/4,+\infty)$
and finishes the proof of Theorem~\ref{mainthm1}.
\end{proof}

\section{Discrete spectrum of $-\Delta_{\alpha,\cC_\theta}$}

In this section we show that the discrete spectrum of the 
self-adjoint operator
$-\Delta_{\alpha,\cC_\theta}$ below the bottom $-\alpha^2/4$ of the essential spectrum  is infinite 
for all angles  $\theta\in(0,\pi/2)$
and we estimate the rate of the convergence of these eigenvalues to $-\alpha^2/4$
with the help of  variational principles.
The following lemma will be useful.
\begin{lem}
\label{lem:sep}
Let $\fra_{\alpha,\cC_\theta}$ be the form in \eqref{fra}.
For $\omega\in H^1(\dR^2_+)$ with compact support
$\supp \omega \subset\dR^2_+$ 
define the function
$\psi(r,\varphi,z) := \tfrac{\omega(r,z)}{\sqrt{2\pi r}}$.
Then $\psi \in H^1(\dR^3)$ and
\begin{equation}\label{bitteschoen}
\fra_{\alpha,\cC_\theta}[\psi] 
= \|\nabla\omega\|^2_{L^2(\dR^2_+;\dC^2)} 
- \int_{\dR^2_+}\frac{1}{4r^2}|\omega(r,z)|^2\rmd r\rmd z 
-\alpha\|\omega|_{\Gamma_\theta}\|^2_{L^2(\Gamma_\theta)},
\end{equation}
where $\Gamma_\theta$ is the ray in Figure~\ref{fig}. 
\end{lem}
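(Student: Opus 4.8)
The plan is to pass to cylindrical coordinates and use the Pythagorean decomposition of the gradient, together with the precise substitution $\psi = \omega/\sqrt{2\pi r}$, and keep careful track of the cross term that appears when differentiating a product. First I would record that $\psi$ is rotationally invariant (independent of $\varphi$), supported away from the $z$-axis by the compact support assumption $\supp\omega\subset\dR^2_+$, hence $r$ is bounded below by a positive constant on $\supp\psi$; this immediately gives $\psi\in H^1(\dR^3)$ since $\omega\in H^1(\dR^2_+)$ and $1/\sqrt{2\pi r}$ is smooth and bounded with bounded derivatives there. Next, using $\|\nabla\psi\|^2_{L^2(\dR^3)} = \int_0^{2\pi}\int_{\dR^2_+}\bigl(|\partial_r\psi|^2 + |\partial_z\psi|^2\bigr)\,r\,\rmd r\,\rmd z\,\rmd\varphi = 2\pi\int_{\dR^2_+}\bigl(|\partial_r\psi|^2 + |\partial_z\psi|^2\bigr)\,r\,\rmd r\,\rmd z$ (no angular term), I would substitute $\partial_z\psi = (\partial_z\omega)/\sqrt{2\pi r}$ and $\partial_r\psi = (\partial_r\omega)/\sqrt{2\pi r} - \omega/(2\sqrt{2\pi}\,r^{3/2})$ as in \eqref{r}.

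The main computation is then expanding $|\partial_r\psi|^2\, r$. One gets three contributions: $\tfrac{1}{2\pi r}|\partial_r\omega|^2 \cdot r = \tfrac{1}{2\pi}|\partial_r\omega|^2$, the square of the second term $\tfrac{1}{8\pi r^3}|\omega|^2\cdot r = \tfrac{1}{8\pi r^2}|\omega|^2$, and the cross term $-2\,\RE\bigl(\tfrac{\partial_r\omega}{\sqrt{2\pi r}}\,\overline{\tfrac{\omega}{2\sqrt{2\pi}\,r^{3/2}}}\bigr)\cdot r = -\tfrac{1}{2\pi r}\RE(\partial_r\omega\,\bar\omega) = -\tfrac{1}{4\pi r}\,\partial_r(|\omega|^2)$. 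The crucial step is to dispose of this cross term by integration by parts in $r$ against the factor $1/r$: since $\int_{\dR^2_+}\tfrac{1}{r}\partial_r(|\omega|^2)\,\rmd r\,\rmd z = -\int_{\dR^2_+}(\partial_r\tfrac1r)|\omega|^2\,\rmd r\,\rmd z = \int_{\dR^2_+}\tfrac{1}{r^2}|\omega|^2\,\rmd r\,\rmd z$, with no boundary terms (the boundary term at $r=0$ is absent because $\supp\omega$ is bounded away from $r=0$, and at $r=\infty$ because $\omega$ has compact support). Combining, $-\tfrac{1}{4\pi}\int\tfrac1r\partial_r(|\omega|^2) = -\tfrac{1}{4\pi}\int\tfrac{1}{r^2}|\omega|^2$, which together with the $+\tfrac{1}{8\pi r^2}|\omega|^2$ term yields a net $-\tfrac{1}{8\pi r^2}|\omega|^2$. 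Multiplying the whole gradient identity by $2\pi$ gives $\|\nabla\psi\|^2_{L^2(\dR^3)} = \|\nabla\omega\|^2_{L^2(\dR^2_+)} - \int_{\dR^2_+}\tfrac{1}{4r^2}|\omega|^2\,\rmd r\,\rmd z$, which is exactly the first two terms on the right-hand side of \eqref{bitteschoen}. (Here I am implicitly using that $|\partial_r\omega|^2 + |\partial_z\omega|^2 = |\nabla\omega|^2$.)

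Finally I would treat the interaction term: the surface measure on $\cC_\theta$ satisfies $\rmd\sigma = r\,\rmd\varphi\,\rmd\ell$ where $\rmd\ell$ is arclength along the generating ray $\Gamma_\theta$, so $\alpha\int_{\cC_\theta}|\psi|^2\,\rmd\sigma = \alpha\int_0^{2\pi}\int_{\Gamma_\theta}\tfrac{|\omega|^2}{2\pi r}\,r\,\rmd\ell\,\rmd\varphi = \alpha\int_{\Gamma_\theta}|\omega|^2\,\rmd\ell = \alpha\|\omega|_{\Gamma_\theta}\|^2_{L^2(\Gamma_\theta)}$. Subtracting this gives \eqref{bitteschoen}. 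The one point requiring a little care — and the only real obstacle — is the legitimacy of the integration by parts producing the Hardy-type term: one should justify it first for $\omega\in C_0^\infty(\dR^2_+)$ and then extend to $\omega\in H^1$ with compact support in $\dR^2_+$ by density, using that both sides of \eqref{bitteschoen} are continuous in the $H^1(\dR^2_+)$-norm on the subspace of functions supported in a fixed compact $K\subset\dR^2_+$ (on such $K$ the weight $1/r^2$ is bounded, so the Hardy term is controlled by $\|\omega\|^2_{L^2}$, and the trace term by the continuity of the trace map onto $\Gamma_\theta\cap K$).
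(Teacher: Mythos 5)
Your proposal is correct and follows essentially the same route as the paper's proof: the same cylindrical-coordinate decomposition, the same expansion of $|\partial_r\psi|^2$ with the cross term converted into the Hardy-type term $-\tfrac{1}{4r^2}|\omega|^2$ by integrating $\tfrac{1}{r}\partial_r(|\omega|^2)$ by parts (no boundary terms thanks to $\supp\omega\subset\dR^2_+$ compact), and the same reduction of the surface integral to $\|\omega|_{\Gamma_\theta}\|^2_{L^2(\Gamma_\theta)}$. Your closing remark on justifying the integration by parts by density is a sensible extra precaution that the paper leaves implicit.
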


\begin{proof}
First of all observe that 
\begin{equation}
\label{psi1}
\|\psi\|^2_{L^2(\dR^3)} = \int_\dR\int_{\dR_+}\int_0^{2\pi} \tfrac{\vert\omega(r,z)\vert^2}{2\pi r}\,r\,\rmd \varphi\rmd r\rmd z= \|\omega\|^2_{L^2(\dR^2_+)} <\infty.
\end{equation}
Moreover, we compute
\begin{equation}
\label{deriv}
\partial_r \psi = \frac{\partial_r\omega}{\sqrt{2\pi r}} 
- \frac{\omega}{2r\sqrt{2\pi r}}\quad\text{and}\quad 
\partial_z\psi = \frac{\partial_z\omega}{\sqrt{2\pi r}},
\end{equation}
and setting $\rho := \inf\{r \colon (r,z)\in\supp\omega\}>0$ we obtain
\begin{equation}
\label{psi2}
\begin{split}
\|\nabla\psi\|^2_{L^2(\dR^3)} &= 
\|\partial_r\psi\|^2_{L^2(\dR^3)}+ 
\|\partial_z\psi\|^2_{L^2(\dR^3)} \\
&\leq
2
\big\|\tfrac{\partial_r\omega}{\sqrt{2\pi r}} \big\|_{L^2(\dR^3)}^{ 2}+
 2\big\|\tfrac{\omega}{2r\sqrt{2\pi r}}\big\|_{L^2(\dR^3)}^{2}+ 
\big\|\tfrac{\partial_z\omega}{\sqrt{2\pi r}}
\big\|^2_{L^2(\dR^3)} \\
&\leq 2\|\partial_r\omega\|_{L^2(\dR^2_+)}^{2} + 
 \tfrac{1}{2\rho^2}\|\omega\|_{L^2(\dR^2_+)}^{ 2} +
\|\partial_z\omega\|^2_{L^2(\dR^2_+)} <\infty.
\end{split}
\end{equation}
Hence \eqref{psi1} and \eqref{psi2} imply $\psi \in H^1(\dR^3)$. Next we substitute $\psi$ in the form  $\fra_{\alpha,\cC_\theta}$
in \eqref{fra}.
It follows from the form of $\partial_z \psi$ in \eqref{deriv} and $\|\psi|_{\cC_\theta}\|^2_{L^2(\cC_\theta)}=\|\omega|_{\Gamma_\theta}\|^2_{L^2(\Gamma_\theta)}$ that
\begin{equation}\label{fraas}
\begin{split}
\fra_{\alpha,\cC_\theta}[\psi]
& = \int_{\dR}\int_{\dR_+}|\partial_r\psi|^22\pi r\rmd r \rmd z + \int_{\dR}\int_{\dR_+}|\partial_z\psi|^22\pi r\rmd r\rmd z - 
    \alpha \|\psi|_{\cC_\theta}\|^2_{L^2(\cC_\theta)}\\
    & = \int_{\dR}\int_{\dR_+}|\partial_r\psi|^22\pi r\rmd r \rmd z + \int_\dR\int_{\dR_+}|\partial_z\omega|^2\rmd r \rmd z - 
    \alpha \|\omega|_{\Gamma_\theta}\|^2_{L^2(\Gamma_\theta)}.
\end{split}
\end{equation}
Denote the first integral by $I_\psi$.
Making use of $\partial_r \psi$ in \eqref{deriv} we rewrite  $I_\psi$  as
\begin{equation}
\label{I1}
I_\psi = \int_{\dR}\int_{\dR_+}|\partial_r\omega|^2\rmd r \rmd z + \int_{\dR}\int_{\dR_+} \frac{1}{4r^2}|\omega|^2\rmd r \rmd z -\int_{\dR}\int_{\dR_+} 
\frac{1}{r}\RE(\partial_r\omega \ov\omega)
\rmd r\rmd z
\end{equation}
and the last term can be further rewritten as
\begin{equation}
\label{I2}
\int_{\dR}\int_{\dR_+} 
\frac{1}{r}\RE(\partial_r\omega \ov\omega)\rmd r \rmd z = \int_{\dR}\int_{\dR_+} 
\frac{1}{2r}\partial_r\big(|\omega|^2\big)\rmd r \rmd z = 
\int_{\dR}\int_{\dR_+} 
\frac{1}{2r^2}|\omega|^2\rmd  r\rmd z,
\end{equation}
where we integrated by parts and used the fact that $\supp \omega$ is contained in the open half-plane $\dR^2_+$.
Hence, \eqref{I1} and \eqref{I2} imply
\begin{equation*}
I_\psi = \int_\dR \int_{\dR_+}|\partial_r\omega|^2\rmd r \rmd z -
\int_\dR \int_{\dR_+}\frac{1}{4r^2}|\omega|^2\rmd r \rmd z.
\end{equation*}
Substituting this expression for the first integral in \eqref{fraas} we obtain \eqref{bitteschoen}.
\end{proof}

Now we are ready to formulate and prove our main result on the infiniteness of the discrete spectrum of $-\Delta_{\alpha,\cC_\theta}$ below the bottom
of the essential spectrum for all  $\alpha>0$ and $\theta \in (0,\pi/2)$. Recall that $-\Delta_{\alpha,\cC_\theta}$ is bounded from below, 
and hence it also follows that the 
discrete spectrum  has a single accumulation point, namely 
$-\alpha^2/4$.
\begin{thm}
\label{thm:main}
Let $-\Delta_{\alpha,\cC_\theta}$ be the self-adjoint operator in $L^2(\dR^3)$ associated to the form \eqref{fra}
and let  $\alpha>0$ and $\theta \in (0,\pi/2 )$. Then the discrete spectrum of 
$-\Delta_{\alpha,\cC_\theta}$ below $-\alpha^2/4$ is infinite, accumulates at $-\alpha^2/4$, and the eigenvalues 
$\lambda_k<-\alpha^2/4$  (enumerated in non-decreasing order with multiplicities taken into account) satisfy 
the estimate 
\begin{equation}\label{specest}
\lambda_k \le -\frac{\alpha^2}{4} - 
\frac{\gamma(\theta)}{n_k^4},\qquad k\in\dN,
\end{equation}
holds, where $\gamma(\theta) > 0$, $n_{k+1} := n_k^2 +n_k$ for  $k\in\dN$, and $n_1=N$ with $N\in\dN$ sufficiently large.
\end{thm}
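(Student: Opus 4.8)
The plan is to prove \eqref{specest} via the variational (max-min) principle: it suffices to exhibit, for each $k$, a subspace $\cL_k\subset H^1(\dR^3)$ of dimension $k$ on which the Rayleigh quotient $\fra_{\alpha,\cC_\theta}[\psi]/\|\psi\|^2$ stays below $-\alpha^2/4-\gamma(\theta)n_k^{-4}$, since the infiniteness of the discrete spectrum below the essential-spectrum threshold $-\alpha^2/4$ (established in Theorem~\ref{mainthm1}) then follows simultaneously with the quantitative estimate. The accumulation at $-\alpha^2/4$ is immediate from the semiboundedness of $-\Delta_{\alpha,\cC_\theta}$ once infiniteness below $-\alpha^2/4$ is known, as already noted in the text preceding the theorem.

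First I would use Lemma~\ref{lem:sep} to reduce the three-dimensional problem to a two-dimensional one: for a trial function of the form $\psi=\omega(r,z)/\sqrt{2\pi r}$ with $\supp\omega\subset\dR^2_+$ compact, we have
\[
\fra_{\alpha,\cC_\theta}[\psi]=\|\nabla\omega\|_{L^2(\dR^2_+;\dC^2)}^2-\int_{\dR^2_+}\tfrac{1}{4r^2}|\omega|^2\,\rmd r\,\rmd z-\alpha\|\omega|_{\Gamma_\theta}\|_{L^2(\Gamma_\theta)}^2.
\]
In the rotated coordinates $(s,t)$ adapted to $\Gamma_\theta$ (as in Figure~\ref{fig}), the gradient term is invariant and the $\delta$-term localizes to $\{t=0\}$; the only genuinely two-dimensional complication is the Hardy-type term $-\tfrac{1}{4r^2}|\omega|^2$, which is, however, harmless in the region $s$ large because there $r\asymp s\sin\theta\to\infty$. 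Then I would take a tensor-product ansatz $\omega(s,t)=f(s)\,h(t)$, where $h(t)=e^{-\frac{\alpha}{2}|t|}$ is the ground state of the one-dimensional $\delta$-interaction of strength $\alpha$ (eigenvalue $-\alpha^2/4$), cut off smoothly to a compact $t$-interval, and $f$ is supported in a window $s\in[a_k,b_k]$ far from the origin. A direct computation, entirely analogous to \eqref{omeganplimit}--\eqref{t}, gives
\[
\fra_{\alpha,\cC_\theta}[\psi]\le\Big(-\tfrac{\alpha^2}{4}+o(1)\Big)\|\psi\|^2+\Big(\|f'\|_{L^2}^2+\text{error terms}\Big)\cdot\text{const},
\]
so the quotient is pushed below $-\alpha^2/4$ provided $\|f'\|^2/\|f\|^2$ can be made arbitrarily small, i.e. provided the $s$-window $[a_k,b_k]$ is long enough; the gain over $-\alpha^2/4$ is of order (window length)$^{-2}$. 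Choosing nested dyadic-type windows $[N^{2^{k-1}},N^{2^k}]$ — which is exactly what the recursion $n_{k+1}=n_k^2+n_k$, $n_1=N$ encodes — yields $k$ functions with pairwise disjoint supports, hence $L^2$-orthogonal and linearly independent, with window lengths $\sim n_k^2$, giving a gain $\sim n_k^{-4}$ after carefully tracking that the Hardy term, the curvature correction $r(s,t)$ versus $r(s,0)$, and the cutoff errors are all dominated by this $n_k^{-4}$ (this is where the uniform $\cO(n^{-1/2})$-type estimates and the exponential bound of \cite[Proposition 2.5]{EY02} used in Theorem~\ref{mainthm1} reappear).

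The main obstacle is the bookkeeping of the error terms: one must choose the $s$-profile $f$ (e.g. a triangular or sinusoidal bump on $[a_k,b_k]$) so that $\|f'\|^2/\|f\|^2\sim (b_k-a_k)^{-2}\sim n_k^{-4}$ with the \emph{right} constant $\gamma(\theta)$, while simultaneously ensuring the Hardy term $\int\tfrac{1}{4r^2}|\omega|^2$ is $O(\rho_k^{-2})=O(a_k^{-2})$, which decays far faster than $n_k^{-4}$, and the $t$-cutoff contributes an exponentially small error. The $\theta$-dependence enters through $r(s,t)=s\sin\theta+t\cos\theta$ along $\Gamma_\theta$, which affects both the weight in the $\delta$-term integral and the comparison $r(s,t)/r(s,0)=1+O(t/s)$ on the support; isolating a clean positive constant $\gamma(\theta)$ from these competing contributions, rather than merely an $O(n_k^{-4})$ bound, is the delicate point. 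I expect the cleanest route is to first prove the infiniteness of the discrete spectrum using a soft, non-quantitative version of this argument (any window lengths $\to\infty$), and then revisit the construction with the explicit recursive windows to extract \eqref{specest}, quoting the one-dimensional estimates from Section~2 to control the replacement of the truncated Neumann problem on $(-\sqrt{n},\sqrt{n})$ by the exact ground state $e^{-\frac{\alpha}{2}|t|}$ on the whole line.
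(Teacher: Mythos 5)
Your overall architecture (reduction via Lemma~\ref{lem:sep}, tensor trial functions $f(s)h(t)$ with $h$ the $1$D $\delta$-ground state, consecutive $s$-windows of length $n_k^2$ encoded by $n_{k+1}=n_k^2+n_k$, disjoint supports, min--max) matches the paper. But there is a genuine gap at the heart of the argument: you have misidentified the source of the binding. You treat the Hardy-type term $-\int_{\dR^2_+}\tfrac{1}{4r^2}|\omega|^2$ as a harmless error, claiming it is $O(a_k^{-2})$ and ``decays far faster than $n_k^{-4}$.'' Both claims are wrong. With a window $s\in[n_k,n_k+n_k^2]$ one has $a_k\sim n_k$, so $a_k^{-2}=n_k^{-2}\gg n_k^{-4}$; and the correctly normalized computation (substituting $u=(s-n)/n^2$) shows this term equals $\tfrac{2}{\alpha}\tfrac{1}{n^4}\tfrac{1}{4\sin^2\theta}\int_0^1|\chi_1(u)|^2u^{-2}\,\rmd u+o(n^{-4})$ --- \emph{exactly the same order} $n^{-4}$ as the longitudinal kinetic energy $\|f'\|^2/\|f\|^2$. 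More importantly, if you discard it, nothing negative remains: the transverse part contributes $-\alpha^2/4$ up to exponentially small cutoff errors, and the longitudinal part contributes $+\|f'\|^2/\|f\|^2>0$, so your Rayleigh quotient sits \emph{above} $-\alpha^2/4$ no matter how long the window is. Making $\|f'\|^2/\|f\|^2$ small does not by itself ``push the quotient below'' the threshold; indeed, without the $\tfrac{1}{4r^2}$ term the reduced form is that of a $\delta$-interaction on a straight line in the plane, which has no discrete spectrum below $-\alpha^2/4$.

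The missing idea is the competition, at the common order $n^{-4}$, between the positive term $\|\chi_1'\|^2_{L^2(0,1)}$ and the negative term $\tfrac{1}{4\sin^2\theta}\int_0^1|\chi_1(u)|^2u^{-2}\,\rmd u$, and the fact that the latter can be made to win. This is precisely condition \eqref{BM}: since the sharp constant $1/4$ in the one-dimensional Hardy inequality on $(0,1)$ is not attained but is approached, and since $\sin^2\theta<1$ strictly for $\theta\in(0,\pi/2)$, the lemma of Brezis--Marcus \cite{BM97} supplies a $\chi_1\in H^1_0(0,1)$ with $\|\chi_1'\|^2<\tfrac{1}{4\sin^2\theta}\int_0^1|\chi_1|^2t^{-2}\,\rmd t$. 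Only with such a $\chi_1$ does the shifted form value $S_n=\fra_{\alpha,\cC_\theta}[\psi_n]+\tfrac{\alpha^2}{4}\|\psi_n\|^2$ become $\le -2\gamma(\theta)/(\alpha n^4)$, which is what produces both the strict negativity and the constant $\gamma(\theta)$ in \eqref{specest}. (This also explains why the result fails for the half-plane $\theta=\pi/2$, where the Hardy constant blocks the inequality.) A secondary correction: the estimate of \cite[Proposition 2.5]{EY02} is used in the paper only for the essential-spectrum upper bound, not here; in this proof the transverse cutoff errors are controlled by direct exponentially small estimates.
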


\begin{proof}
Let us pick a function 
$\chi_1\in H_0^1(0,1)$ with 
$\|\chi_1\|_{L^2(0,1)} = 1$ such that 
\begin{equation}
\label{BM}
\|\chi_1'\|^2_{L^2(0,1)} < 
\frac{1}{4\sin^2\theta}\int_0^1 \frac{|\chi_1(t)|^2}{t^2}\rmd t
\end{equation}
holds; cf. \cite[Lemma in \S 1]{BM97}.
Let us fix $\eps > 0$ and choose 
$\chi_2\in C^\infty_0(-\eps,\eps)$ such that $0 \le \chi_2\le 1$ and 
$\chi_2(t) = 1$ for $|t| \le \eps/2$.
In the coordinate system $(s,t)$ in Figure~\ref{fig} 
we define
the sequence of functions
\[
\omega_n(s,t) := 
\tfrac{1}{n}\chi_1(\tfrac{s-n}{n^2})
\chi_2(\tfrac{t}{\sqrt{n}})
\exp(-\tfrac{\alpha}{2}|t|) \in H^1_0(\dR^2_+).
\]
For sufficiently large $n\in\dN$ the functions $\omega_n$
satisfy the conditions of Lemma~\ref{lem:sep}. The function $\omega_n$ can also be viewed as a function 
in $r$ and $z$; cf. Figure~\ref{fig}. 
Then we define
\begin{equation}
\label{psi}
\psi_n(r,\varphi,z) := \frac{\omega_n(r,z)}{\sqrt{2\pi r}}, \qquad n\in\dN.
\end{equation}
Using Lemma~\ref{lem:sep} we compute the values
\begin{equation}\label{Sn}
\begin{split}
S_n :=& \fra_{\alpha,\cC_\theta}[\psi_n] + 
\frac{\alpha^2}{4}\|\psi_n\|^2_{L^2(\dR^3)}\\
=& \|\nabla\omega_n\|^2_{L^2(\dR^2_+;\dC^2)} 
- \int_{\dR^2_+}\frac{1}{4r^2}|\omega_n|^2\rmd r \rmd z 
-\alpha\|\omega_n|_{\Gamma_\theta}\|^2_{L^2(\Gamma_\theta)}+\frac{\alpha^2}{4}\|\omega_n\|^2_{L^2(\dR^2_+)}.
\end{split}
\end{equation}
It is not difficult to check the asymptotics 
\begin{align}\label{asymp1}
&\int_{-\eps\sqrt{n}}^{\eps\sqrt{n}} 
\big|\chi_2\big(\tfrac{t}{\sqrt{n}}\big)\big|^2
e^{-\alpha|t|}\rmd t 
= \frac{2}{\alpha} + \cO(e^{-c\sqrt{n}}),&\qquad n\rightarrow\infty,\\
&\int_{-\eps\sqrt{n}}^{\eps\sqrt{n}} 
\big|\chi_2'(\tfrac{t}{\sqrt{n}}\big)|^2
e^{-\alpha|t|}\rmd t =\cO(e^{-c\sqrt{n}}),
&\qquad n\rightarrow\infty,\label{asymp2}\\\
&\int_{-\eps\sqrt{n}}^{\eps\sqrt{n}} 
\chi_2\big(\tfrac{t}{\sqrt{n}}\big)\chi_2'\big(\tfrac{t}{\sqrt{n}}\big)
e^{-\alpha|t|}\rmd t =\cO(e^{-c\sqrt{n}}),
&\qquad n\rightarrow\infty,\label{asymp3}
\end{align}
with some constant $c >0$. 
Using  \eqref{asymp1} 
we get 
\begin{equation}\label{norm}
\begin{split}
\frac{\alpha^2}{4}\|\omega_n\|^2_{L^2(\dR^2_+)}& = 
\frac{\alpha^2}{4}\Bigg(\frac{1}{n^2}\int_n^{n+n^2}|\chi_1(\tfrac{s-n}{n^2})|^2\rmd s\Bigg)
\Bigg(\int_{-\eps\sqrt{n}}^{\eps\sqrt{n}}
\big|\chi_2(\tfrac{t}{\sqrt{n}})\big|^2e^{-\alpha|t|}\rmd t\Bigg)\\[0.4ex]
&=\frac{\alpha}{2} + \cO(e^{-c\sqrt{n}}),\qquad n\rightarrow\infty,
\end{split}
\end{equation}
and
\begin{equation*}
\begin{split}
\|\partial_s\omega_n\|^2_{L^2(\dR^2_+)} &
\!=\! \Bigg(
\int_{-\eps\sqrt{n}}^{\eps\sqrt{n}}
\big|\chi_2(\tfrac{t}{\sqrt{n}})\big|^2e^{-\alpha|t|}\rmd t
\Bigg)\Bigg(
\frac{1}{n^4}\frac{1}{n^2}
\int_{n}^{n+n^2}
\big|\chi_1'(\tfrac{s-n}{n^2})\big|^2\rmd s\Bigg)\\[0.4ex]
& = \frac{2}{\alpha}\frac{1}{n^4}\|\chi_1'\|^2_{L^2(0,1)} 
+ \cO(e^{-c\sqrt{n}}),\qquad n\rightarrow\infty,
\end{split}
\end{equation*}
and from \eqref{asymp2} and \eqref{asymp3} we obtain 
\begin{equation*}
\begin{split}
\|\partial_t\omega_n\|^2_{L^2(\dR^2_+\!)} &\! = \!
\Bigg(\!\frac{1}{n^2}\int_n^{n+n^2}\!
\big|\chi_1(\tfrac{s-n}{n^2})\big|^2\rmd s\Bigg)\!\!
\Bigg(
\int_{-\eps\sqrt{n}}^{\eps\sqrt{n}}
\Bigg|\tfrac{\chi_2'\big(\tfrac{t}{\sqrt{n}}\big)}{\sqrt{n}} \!
-\! \tfrac{\alpha\sign (t)\chi_2\big(\tfrac{t}{\sqrt{n}}\big)}{2}\Bigg|^2\!
e^{-\alpha|t|}\rmd t\Bigg) \\[0.4ex]
&= \frac{\alpha}{2} + \cO(e^{-c\sqrt{n}}),\qquad n\rightarrow \infty,
\end{split}
\end{equation*}
that is, 
\begin{equation}\label{jussiincluded}
 \|\nabla\omega_n\|^2_{L^2(\dR^2_+;\dC^2)}=\frac{2}{\alpha}\frac{1}{n^4}\|\chi_1'\|^2_{L^2(0,1)} +\frac{\alpha}{2} + \cO(e^{-c\sqrt{n}}),\qquad n\rightarrow \infty.
\end{equation}
It is simple to see that
\begin{equation}\label{normtrace}
\alpha\|\omega_n|_{\Gamma_\theta}\|^2_{L^2(\Gamma_\theta)}  = 
\frac{\alpha}{n^2}\int_n^{n+n^2}\Big|\chi_1(\tfrac{s-n}{n^2})\Big|^2
\rmd s = 
\alpha\|\chi_1\|^2_{L^2(0,1)} = \alpha,
\end{equation}
and hence
it remains to estimate the term 
$\int_{\dR^2_+} \frac{1}{4r^2}|\omega_n|^2$ in \eqref{Sn}.
For that we make the following splitting
\begin{equation}
\label{lastterm}
\int_{\dR^2_+}\frac{1}{4r^2}|\omega_n(r,z)|^2drdz  = 
\int_{-\eps\sqrt{n}}^{\eps\sqrt{n}}\int_n^{n+n^2}
\frac{1}{4r(s,t)^2}|\omega_n(s,t)|^2\rmd s \rmd t =I_n + J_n,
\end{equation}
where
\begin{equation}
\label{In}
I_n  :=
\int_{-\eps\sqrt{n}}^{\eps\sqrt{n}}\int_n^{n+n^2}
\frac{1}{4r(s,0)^2}|\omega_n(s,t)|^2\rmd s \rmd t
\end{equation}
and
\begin{equation*}
J_n  := \int_{-\eps\sqrt{n}}^{\eps\sqrt{n}}\int_n^{n+n^2}
\Bigg(\frac{1}{4r(s,t)^2}-\frac{1}{4r(s,0)^2}\Bigg)
|\omega_n(s,t)|^2\rmd s \rmd t.
\end{equation*}
The term $J_n$ can be further rewritten as
\begin{equation}
\label{Jn2}
J_n = \int_{-\eps\sqrt{n}}^{\eps\sqrt{n}}\int_n^{n+n^2}
\frac{(r(s,0) - r(s,t))(r(s,0) +r(s,t))}{4r(s,t)^2r(s,0)^2}
|\omega_n(s,t)|^2\rmd s \rmd t.
\end{equation}
For geometric reasons we have 
$|r(s,0) - r(s,t)| \le a\sqrt{n}$ with some $0<a \leq\varepsilon$  and 
$r(s,t) > bn$ with some $b > 0$
for all $(s,t)\in\supp \omega_n$.
We first conclude from \eqref{Jn2} that
\[
|J_n| \le a\sqrt{n}
\int_{-\eps\sqrt{n}}^{\eps\sqrt{n}}\int_n^{n+n^2}
\Bigg|\frac{2}{4r(s,t)r(s,0)^2} + 
\frac{r(s,0) -r(s,t)}{4r(s,t)^2r(s,0)^2}\Bigg||\omega_n(s,t)|^2\rmd s \rmd t
\]
and hence 
\begin{equation}
\label{Jn3}
|J_n| \le\Big(\frac{2a}{b\sqrt{n}} + \frac{a^2}{b^2n}\Big)I_n
\end{equation}
follows together with \eqref{In}.
For $I_n$ we have
\begin{equation}
\label{In2}
\begin{split}
I_n & = \Bigg(\frac{1}{n^2}\int_n^{n+n^2}
\frac{|\chi_1(\tfrac{s-n}{n^2})|^2}{4s^2\sin^2\theta}\rmd s\Bigg)
\Bigg(\int_{-\eps\sqrt{n}}^{\eps\sqrt{n}}
\big|\chi_2\big(\tfrac{t}{\sqrt{n}}\big)\big|^2
e^{-\alpha|t|}\rmd t\Bigg) \\
&= \Bigg(
\frac{1}{n^4}
\int_0^1\frac{|\chi_1(u)|^2}{4\sin^2(\theta)(u+1/n)^2}\rmd u\Bigg) 
\Bigg(\int_{-\eps\sqrt{n}}^{\eps\sqrt{n}}
\big|\chi_2\big(\tfrac{t}{\sqrt{n}}\big)\big|^2
e^{-\alpha|t|}\rmd t\Bigg),
\end{split}
\end{equation}
and the choice of $\chi_1$ (see \eqref{BM}) together with monotone convergence yields
\begin{equation*}
\int_0^1\frac{|\chi_1(u)|^2}{(u+1/n)^2}\rmd u = 
\int_0^1\frac{|\chi_1(u)|^2}{u^2}\rmd u + o(1),\qquad n\rightarrow\infty.
\end{equation*}
Hence we conclude from \eqref{asymp1} and \eqref{In2} that
\begin{equation*}
 I_n = \frac{2}{\alpha}  \frac{1}{n^4} \frac{1}{4 \sin^2(\theta)} \int_0^1\frac{|\chi_1(u)|^2}{u^2}\rmd u + o\Bigg(\frac{1}{n^4}\Bigg),\qquad n\rightarrow\infty,
\end{equation*}
and from \eqref{Jn3} we find
\begin{equation*}
J_n=o\Bigg(\frac{1}{n^4}\Bigg),\qquad n\rightarrow\infty.
\end{equation*}
It follows that \eqref{lastterm} becomes 
\begin{equation}\label{lastterm2}
\int_{\dR^2_+}\frac{1}{4r^2}|\omega_n(r,z)|^2\rmd r \rmd z = 
\frac{2}{\alpha}\frac{1}{n^4} \frac{1}{4\sin^2(\theta)} 
\int_0^1\frac{|\chi_1(u)|^2}{ u^2}\rmd u
+ o\Bigg(\frac{1}{n^4}\Bigg),\qquad n\rightarrow\infty.
\end{equation}
Finally, \eqref{norm}, \eqref{jussiincluded}, \eqref{normtrace} and \eqref{lastterm2} yield 
\begin{equation}
\label{Snsymp}
S_n = \frac{2}{\alpha}\frac{1}{n^4}
\Bigg(
\|\chi_1'\|^2_{L^2(0,1)}
- \int_0^1\frac{|\chi_1(u)|^2}{4\sin^2(\theta) u^2}\rmd u
\Bigg) + o\Bigg(\frac{1}{n^4}\Bigg),\qquad n\rightarrow\infty,
\end{equation}
for $S_n$ in \eqref{Sn}.
In view of the above asymptotics and according to \eqref{BM} 
there exists $N \in\dN$  such that for all $n\ge N$  we have
\begin{equation}
\label{Snest}
S_{n} \le -\frac{2\gamma(\theta)}{\alpha n^4}
\end{equation}
for some constant $\gamma(\theta) > 0$.
Let us consider a sequence  $\{n_k\}_k$, 
where $n_1 := N$ and $n_{k+1} := n_k^2 + n_k$  for $k\in\dN$.
Then the functions $\psi_{n_k}$ in \eqref{psi}
have disjoint supports for all $k\in\dN$ and hence are orthogonal in $L^2(\dR^3)$.
The space
\[
F_k := {\rm span}\big\{\psi_{n_1},\psi_{n_2},\dots,\psi_{n_k}\big\}\subset H^1(\dR^3),
\]
has dimension $k$ and for an arbitrary
$\psi = \sum_{l=1}^k a_l \psi_{n_l}\in F_k$,  $a_l\in\dC$, we get
\begin{equation}
\label{normpsi}
\|\psi\|^2_{L^2(\dR^3)}  = 
\sum_{l=1}^k |a_l|^2\|\psi_{n_l}\|^2_{L^2(\dR^3)}
=\sum_{l=1}^k |a_l|^2\|\omega_{n_l}\|^2_{L^2(\dR^2_+)}
\leq \frac{2}{\alpha}
\sum_{l=1}^k |a_l|^2,
\end{equation}
where we have also used the estimate
$\|\omega_{n_l}\|^2_{L^2(\dR^2_+)}\leq \frac{2}{\alpha}$. 
Employing \eqref{Snest} we obtain
\[
\fra_{\alpha,\cC_\theta}[\psi] + 
\frac{\alpha^2}{4}\|\psi\|^2_{L^2(\dR^3)}  = 
\sum_{l=1}^k |a_l|^2 S_{n_l} \le 
-\frac{2\gamma(\theta)}{\alpha n_k^4} 
\sum_{l=1}^k |a_l|^2,
\]
where we have again used the disjointness of the supports of $\{\psi_{n_l}\}_{l=1}^k$.
Combining the above estimate with \eqref{normpsi} we get
\begin{equation}
\label{finalest}
\frac{\fra_{\alpha,\cC_\theta}[\psi]}{\|\psi\|^2_{L^2(\dR^3)}}
= -\frac{\alpha^2}{4} + 
\frac{\fra_{\alpha,\cC_\theta}[\psi] +
(\alpha^2/4)\|\psi\|^2_{L^2(\dR^3)}}{\|\psi\|^2_{L^2(\dR^3)}} \le 
-\frac{\alpha^2}{4} -\frac{\gamma(\theta)}{n_k^4}
 < -\frac{\alpha^2}{4}.
\end{equation}
Hence, according to \cite[Theorem 10.2.3]{BS87} the operator 
$-\Delta_{\alpha,\cC_\theta}$ has at least
$k$ eigenvalues below the bottom of the essential spectrum $-\alpha^2/4$. 
The above construction works for any $k\in\dN$, so that
the operator $-\Delta_{\alpha,\cC_\theta}$ has infinitely
many eigenvalues below $-\alpha^2/4$. The eigenvalue estimate 
\eqref{specest} follows from \cite[Theorem 10.2.3]{BS87} and \eqref{finalest}.
\end{proof}

Let $\theta\in (0,\pi/2)$ and $\cC_\theta$ be the conical surface as above.
A hypersurface $\Sigma\subset\dR^3$, which 
for some compact set $K\subset\dR^3$ satisfies the condition $\Sigma\setminus K = \cC_\theta \setminus K$ and which
splits the space $\dR^3$ into
two unbounded Lipschitz domains, is called
{\it a local
deformation} of $\cC_\theta$;
cf. \cite[Section 4.2]{BEL13}. Below we consider the self-adjoint Schr\"{o}dinger operator $-\Delta_{\alpha,\Sigma}$ 
with an attractive $\delta$-interaction of constant strength $\alpha >0$ 
supported on the Lipschitz hypersurface $\Sigma$. This Schr\"{o}dinger operator is defined via the quadratic form
\begin{equation}
\label{fra2}
\fra_{\alpha,\Sigma}[\psi] = 
\|\nabla \psi\|^2_{L^2(\dR^3;\dC^3)} - 
\alpha \int_{ \Sigma}\,\vert\psi\vert^2 \,\drm\sigma   
\qquad \dom\fra_{\alpha, \Sigma}  = H^1(\dR^3).
\end{equation}
The assertion on the essential spectrum in  the next theorem is a consequence of \cite[Theorem 4.7]{BEL13}; the infiniteness of the discrete spectrum can be shown as 
in the proof of Theorem~\ref{thm:main} using the same functions $\psi_n$ in \eqref{psi} and $n\in\dN$ sufficiently large.

\begin{thm}\label{locdeform}
Let $\theta\in (0,\pi/2)$ and $\alpha>0$.
Let $\Sigma$ be a local deformation of the cone $\cC_\theta$ and let $-\Delta_{\alpha,\Sigma}$ be the self-adjoint operator
in $L^2(\dR^3)$ associated to \eqref{fra2}. Then
\[
\sess(-\Delta_{\alpha,\Sigma}) = [-\alpha^2/4,+\infty),
\]
the discrete spectrum below $-\alpha^2/4$ is infinite, accumulates at $-\alpha^2/4$, and the eigenvalues 
$\lambda_k<-\alpha^2/4$  (enumerated in non-decreasing order with multiplicities taken into account) satisfy 
the estimate 
\begin{equation*}
\lambda_k \le -\frac{\alpha^2}{4} - 
\frac{\gamma(\theta)}{n_k^4},\qquad k\in\dN,
\end{equation*}
holds, where $\gamma(\theta) > 0$, $n_{k+1} := n_k^2 +n_k$ 
for $k\in\dN$, and $n_1=N$ with $N\in\dN$ sufficiently large.
\end{thm}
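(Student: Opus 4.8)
The plan is to reduce the statement about $-\Delta_{\alpha,\Sigma}$ to the two theorems already established for $-\Delta_{\alpha,\cC_\theta}$, exploiting the fact that $\Sigma$ and $\cC_\theta$ agree outside a compact set $K$. For the essential spectrum, I would invoke \cite[Theorem 4.7]{BEL13}, which identifies the essential spectrum of $-\Delta_{\alpha,\Sigma}$ with that of $-\Delta_{\alpha,\cC_\theta}$ whenever the two surfaces differ only in a compact region (the intuition being that a compactly supported perturbation of the interaction support is a relatively compact perturbation in the resolvent sense, hence does not move the essential spectrum). Combined with Theorem~\ref{mainthm1} this immediately gives $\sess(-\Delta_{\alpha,\Sigma}) = [-\alpha^2/4,+\infty)$.

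For the infiniteness of the discrete spectrum and the eigenvalue estimate, I would reuse the trial functions $\psi_n$ from \eqref{psi} verbatim. The key observation is that for $n\ge N$ sufficiently large, the support of $\omega_n$ (and hence of $\psi_n$) lies at distance $\rho_n \to +\infty$ from the origin, in particular outside the compact set $K$; consequently $\psi_n$ is supported in the region where $\Sigma$ coincides with $\cC_\theta$, so $\fra_{\alpha,\Sigma}[\psi_n] = \fra_{\alpha,\cC_\theta}[\psi_n]$ and $\|\psi_n\|^2_{L^2(\dR^3)}$ is unchanged. Therefore the asymptotic computation leading to \eqref{Snsymp} and the bound \eqref{Snest} carry over without modification (with possibly a larger choice of $N$ to ensure $\supp\psi_N \cap K = \emptyset$). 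Taking the same sequence $n_k$ with $n_1 = N$, $n_{k+1} = n_k^2 + n_k$, the functions $\psi_{n_1},\dots,\psi_{n_k}$ still have pairwise disjoint supports, still span a $k$-dimensional subspace $F_k \subset H^1(\dR^3)$, and the Rayleigh quotient estimate \eqref{finalest} holds with the same $\gamma(\theta)$. Applying \cite[Theorem 10.2.3]{BS87} as before yields at least $k$ eigenvalues below $-\alpha^2/4$ for every $k$, hence infinitely many, together with the stated estimate $\lambda_k \le -\alpha^2/4 - \gamma(\theta)/n_k^4$; the single accumulation point at $-\alpha^2/4$ then follows from semiboundedness of $-\Delta_{\alpha,\Sigma}$ and the identification of the essential spectrum.

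The only genuine point requiring care—and the main (mild) obstacle—is verifying that the trial functions $\psi_n$ are indeed admissible for $\fra_{\alpha,\Sigma}$ and that the form values coincide with those for $\fra_{\alpha,\cC_\theta}$. Since $\dom\fra_{\alpha,\Sigma} = H^1(\dR^3) = \dom\fra_{\alpha,\cC_\theta}$ and $\psi_n \in H^1(\dR^3)$ by Lemma~\ref{lem:sep}, admissibility is immediate; and because the integrand $\alpha|\psi|^2$ on the interaction surface only sees $\Sigma$ (resp.\ $\cC_\theta$) through values of $\psi$ on $\supp\psi_n$, which avoids $K$, the boundary integrals agree. One should also confirm that "sufficiently large $N$" can be chosen to simultaneously satisfy \eqref{BM}-type requirements, ensure $\supp\omega_n \subset \dR^2_+$, and ensure $\supp\psi_n \cap K = \emptyset$; all three are open conditions satisfied for all large $n$, so a common $N$ exists. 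No new estimates are needed beyond those in the proof of Theorem~\ref{thm:main}.
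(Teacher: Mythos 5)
Your proposal is correct and follows exactly the route the paper takes: the paper derives the essential spectrum from \cite[Theorem 4.7]{BEL13} and obtains the discrete spectrum by reusing the trial functions $\psi_n$ from \eqref{psi} for $n$ sufficiently large, precisely because their supports eventually avoid the compact set $K$ where $\Sigma$ and $\cC_\theta$ differ. Your elaboration of the details (coincidence of the form values, disjointness of supports, unchanged Rayleigh quotient bound) is exactly what the paper leaves implicit.
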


\subsection*{Acknowledgements}

The authors gratefully acknowledge financial support by the Austrian 
Science Fund (FWF), project
P 25162-N26, Czech Science Foundation (GA\v{C}R), project 
 14-06818S
and the Austria-Czech Republic cooperation grant  CZ01/2013.

\end{document}